\newcommand{\de}{\partial}
\newcommand{\ov}[1]{\overline{#1}}
\newcommand{\mn}{\sqrt{-1}}
\newcommand{\tr}[2]{\textrm{tr}_{#1}{#2}}
\newcommand{\ti}[1]{\tilde{#1}}
\begin{document}
\newcounter{remark}
\newcounter{theor}
\setcounter{remark}{0}
\setcounter{theor}{1}
\newtheorem{claim}{Claim}
\newtheorem{theorem}{Theorem}[section]
\newtheorem{proposition}[theorem]{Proposition}
\newtheorem{lemma}[theorem]{Lemma}
\newtheorem{defn}{Definition}[theor]
\newtheorem{corollary}[theorem]{Corollary}
\newenvironment{proof}[1][Proof]{\begin{trivlist}
\item[\hskip \labelsep {\bfseries #1}]}{\end{trivlist}}
\newenvironment{remark}[1][Remark]{\addtocounter{remark}{1} \begin{trivlist}
\item[\hskip
\labelsep {\bfseries #1  \thesection.\theremark}]}{\end{trivlist}}

\centerline{\Large \bf The Calabi-Yau equation}
\smallskip
\centerline{\Large \bf on the Kodaira-Thurston manifold\footnote{Research supported in part by National Science Foundation grant DMS-08-48193.  The second-named author is also supported in part by a Sloan Foundation fellowship.}}

\bigskip
\centerline{\bf Valentino Tosatti and Ben Weinkove}

\bigskip

\begin{abstract}   
We prove that the Calabi-Yau  equation can be solved on the Kodaira-Thurston manifold for all given $T^2$-invariant volume forms.  This provides support for Donaldson's conjecture that Yau's theorem has an extension to symplectic four-manifolds with compatible but non-integrable almost complex structures.
\end{abstract}

\section{Introduction}

A fundamental property of a compact K\"ahler manifold $(M^n, \omega)$ is that one can find K\"ahler metrics with prescribed volume form in a fixed K\"ahler class.  This is known as Yau's Theorem \cite{Y}.  More precisely, given a K\"ahler class $\kappa$ and a volume form $\sigma$  with $\int_M \sigma = \kappa^n$, there exists a unique K\"ahler form $\tilde{\omega}$ in $\kappa$ solving 
\begin{equation} \label{CY0}
\tilde{\omega}^n = \sigma.
\end{equation}
We call (\ref{CY0}) the Calabi-Yau equation.  

In \cite{D}, Donaldson conjectured that Yau's theorem can be extended to the case of general symplectic four-manifolds with compatible almost complex structures, at least in the case $b^+=1$. Moreover, Donaldson outlined a program to use estimates for the Calabi-Yau equation and its generalizations to prove new results for four-manifolds.   For a detailed discussion of this program and recent developments, we refer the reader to \cite{D, W, TWY, LZ, TW, DLZ}.   It was shown in \cite{W} and \cite{TWY} that many of Yau's estimates for (\ref{CY0}) carry over to the non-K\"ahler setting.  In particular, the Calabi-Yau equation can be solved if the Nijenhuis tensor of the almost complex structure is small in a certain sense \cite{W} or if a curvature condition holds for the fixed metric \cite{TWY}.   In this paper we investigate (\ref{CY0}) in the case of a  well-known four-manifold: the Kodaira-Thurston manifold.

The Kodaira-Thurston manifold is given by $M = S^1 \times ( \textrm{Nil}^3 / \Gamma)$ where $\textrm{Nil}^3$ is the Heisenberg group
$$\textrm{Nil}^3 = \left\{ A \in GL(3, \mathbb{R}) \ \Bigg| \ A = \begin{pmatrix} 1 & x & z \\ 0 & 1 & y \\ 0 & 0 & 1 \end{pmatrix}, \ x, y, z \in \mathbb{R}  \right\},$$
and $\Gamma$ is the subgroup of $\textrm{Nil}^3$ consisting of those elements of $\textrm{Nil}^3$ with integral entries, acting by left multiplication. 
Kodaira first investigated $M$ in the 1950s, showing that $M$ admits an integrable complex structure \cite{K}.  Thurston \cite{Th}  later observed that $M$ also admits a symplectic form but no K\"ahler structure, the first manifold known to have this property. This manifold and its higher-dimensional generalizations have been thoroughly studied over the years, see for example \cite{A, AD, CFG, FGG, M}. In this paper we will show that the Calabi-Yau equation can be solved on $M$ assuming $T^2$ invariance.  We make use of some ideas and estimates from \cite{W} and \cite{TWY}.

Writing $t$ for the $S^1$ coordinate, the $1$-forms $dx$, $dt$, $dy$ and $dz- xdy$ on $S^1 \times \textrm{Nil}^3$ are invariant under the action of $\Gamma$ and thus define $1$-forms on $M$.    One can use these $1$-forms to define a symplectic form
 $$\Omega = dx \wedge dt + dy \wedge (dz- xdy)$$
 on $M$ and a compatible almost complex structure
 $$J(dx) = dt, \quad J(dy) = dz- xdy.$$
 The data $(M, \Omega, J)$ is thus an \emph{almost-K\"ahler} manifold, but is not K\"ahler since $J$ is not an integrable complex structure.
 Indeed the Kodaira-Thurston manifold cannot admit K\"ahler structures because  $b^1(M)=3$  \cite{Th}.

There is  a  $T^2$-action on the Kodaira-Thurston manifold $M$ which preserves $\Omega$ and $J$.  Indeed, if we let $S^1$ and $\mathbb{R}$ act on  $S^1 \times \textrm{Nil}^3$ by translation in the $t$ and $z$ coordinates respectively then this action commutes with $\Gamma$ and  gives a free $T^2$ action on $M$ preserving the $1$-forms $dx$, $dt$, $dy$ and $dz-xdy$. This is essentially the only free symplectic $T^2$ action on $M$ \cite{G}.

Our main result is as follows.

\begin{theorem} \label{maintheorem}
Let $\sigma$ be a smooth volume form on $M$, invariant under the $T^2$-action given above and normalized so that $\int_M \sigma = \int_M \Omega^2$.  Then there exists a unique $T^2$-invariant symplectic form $\tilde{\omega}$ cohomologous to $\Omega$, compatible with the almost complex structure $J$, solving the Calabi-Yau equation:
\begin{equation} \label{CY1}
\tilde{\omega}^2 = \sigma.
\end{equation}
\end{theorem}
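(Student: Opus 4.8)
The plan is to use the $T^2$-invariance to convert (\ref{CY1}) into a scalar Monge--Amp\`ere equation on the $2$-torus, and then to solve that equation by the continuity method. Since the $T^2$-action is translation in $t$ and $z$, every $T^2$-invariant tensor depends only on $x,y$, which descend to coordinates on $T^2=\mathbb R^2/\mathbb Z^2$, and every $T^2$-invariant form is built from $dx,\,dt,\,dy,\,\theta$ with coefficients in $C^\infty(T^2)$, where $\theta:=dz-x\,dy$ satisfies $d\theta=-dx\wedge dy$. One checks directly that the $T^2$-invariant $2$-forms compatible with $J$ are exactly
\[
\lambda\, dx\wedge dt + \mu\, dy\wedge\theta + p\,(dx\wedge dy+dt\wedge\theta) + q\,(dx\wedge\theta - dt\wedge dy),\qquad \lambda,\mu,p,q\in C^\infty(T^2);
\]
such a form is positive iff $\lambda>0$ and $\lambda\mu-p^2-q^2>0$, its square equals $2(\lambda\mu-p^2-q^2)\,dx\wedge dt\wedge dy\wedge\theta$, and (using $d\theta=-dx\wedge dy$) it is closed iff $p$ is constant, $\lambda_y=p+q_x$ and $\mu_x=q_y$; integrating $\lambda_y=p+q_x$ over $T^2$ forces $p=0$. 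Using that every $T^2$-invariant exact form has a $T^2$-invariant primitive (average over the torus) together with a short period computation on $T^2$, I would then show that the $T^2$-invariant closed positive $J$-compatible forms cohomologous to $\Omega$ are precisely
\[
\tilde\omega = \Omega + d\!\left(\varphi_x\,dt + \varphi_y\,\theta - \varphi\,dx\right),\qquad \varphi\in C^\infty(T^2),\quad I+D^2\varphi>0,
\]
i.e.\ those with $\lambda=1+\varphi_{xx}$, $\mu=1+\varphi_{yy}$, $q=\varphi_{xy}$, $p=0$, with $\varphi$ unique up to an additive constant; here $I+D^2\varphi$ denotes the $2\times2$ symmetric matrix $\delta_{ij}+\partial_i\partial_j\varphi$. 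Writing $\sigma=2s\,dx\wedge dt\wedge dy\wedge\theta$ with $s\in C^\infty(T^2)$, $s>0$, the normalization $\int_M\sigma=\int_M\Omega^2$ reads $\int_{T^2}s=\int_{T^2}1$, and (\ref{CY1}) becomes
\[
\det\!\left(I+D^2\varphi\right)=(1+\varphi_{xx})(1+\varphi_{yy})-\varphi_{xy}^2 = s \quad\text{on }T^2,\qquad I+D^2\varphi>0 .
\]

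For the Monge--Amp\`ere equation I would run the continuity method, connecting $s$ to the constant function $1$ (with $\varphi\equiv0$ solving the case $s\equiv1$) along a family $s_\tau$ with $\int_{T^2}s_\tau=\int_{T^2}1$ and a uniform positive lower bound. Openness follows from the implicit function theorem, since the linearization $\dot\varphi\mapsto\det(I+D^2\varphi)\,\mathrm{tr}\!\big((I+D^2\varphi)^{-1}D^2\dot\varphi\big)$ is linear, elliptic, and without zeroth-order term, hence an isomorphism on H\"older spaces modulo constants. Closedness requires a priori estimates for $\varphi$, normalized by $\sup_{T^2}\varphi=0$: a $C^0$ estimate (by a Moser-type iteration, in the spirit of \cite{W, TWY}); then the key second-order estimate $\|D^2\varphi\|_{C^0}\le C$, obtained by applying the maximum principle to a suitable function of the largest eigenvalue of $I+D^2\varphi$ (this, together with $\det(I+D^2\varphi)=s\ge c_0>0$, simultaneously keeps $I+D^2\varphi$ uniformly positive); then $C^{2,\alpha}$ by the Evans--Krylov theorem, since $\log\det(I+D^2\varphi)=\log s$ is concave in $D^2\varphi$; and finally $C^\infty$ by Schauder estimates and bootstrapping. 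Uniqueness of $\tilde\omega$ is the maximum principle again: the difference of two solutions of the Monge--Amp\`ere equation satisfies a linear elliptic equation with no zeroth-order term, hence is constant.

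I expect the substantive step to be the reduction, and in particular the cohomological point: verifying that imposing $[\tilde\omega]=[\Omega]$, together with closedness and $J$-compatibility, forces $\tilde\omega$ to arise from a single potential $\varphi$, so that (\ref{CY1}) is genuinely a scalar Monge--Amp\`ere equation on $T^2$ rather than an underdetermined system. This is exactly where the particular structure of the Kodaira--Thurston manifold and the choice of the $T^2$-action are used. Once the reduction is in place, the remaining analysis is classical two-dimensional Monge--Amp\`ere theory, with the second-order estimate as its analytic core; the effect of the $T^2$-invariance is precisely to collapse the delicate higher-order estimates for the Calabi--Yau equation on a general almost-K\"ahler four-manifold into this tractable setting.
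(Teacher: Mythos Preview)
Your approach is correct and takes a genuinely different route from the paper. The paper never introduces a single scalar potential: it works with a $T^2$-invariant primitive $a=f_1\,dx+f_2\,dt+f_3\,dy+f_4\,\theta$, so that $J$-compatibility reduces to the pair of constraints $f_{3,x}-f_{1,y}-f_4=0$ and $f_{2,y}=f_{4,x}$, and the Calabi--Yau equation becomes $(1+f_{2,x})(1+f_{4,y})-f_{4,x}^2=e^F$. Your observation that the cohomological condition $[\tilde\omega]=[\Omega]$ forces the existence of a potential $\varphi$ with $f_2=\varphi_x+{\rm const}$, $f_4=\varphi_y+{\rm const}$ (so that the equation is exactly $\det(I+D^2\varphi)=e^F$ on the flat $T^2$) is the key simplification the paper does \emph{not} make. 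The paper's central Lemma (that $\tilde\Delta u\ge\inf_M\Delta F$ for $u=2+f_{2,x}+f_{4,y}$) is, in your variables, precisely the inequality $L(\Delta\varphi)\ge\Delta\log s$ for the linearized operator $L$, which follows immediately from the concavity of $\log\det$; the paper instead proves it by an explicit $2\times2$ matrix identity. For the second-order bound the paper does \emph{not} use a maximum-principle argument with an auxiliary $C^0$ estimate on a potential: it runs a Moser iteration directly on $u=2+\Delta\varphi$, using the differential inequality together with the cohomological bound $\int_M u\,\Omega^2=2\int_M\Omega^2$, to get $\|u\|_{C^0}\le C$ in one step. Your ordering ($C^0$ on $\varphi$ first, then $C^2$ by maximum principle) is workable but the $C^0$ step is the least justified part of your outline; the cleaner path, once you have the scalar reduction, is the paper's: bound $\Delta\varphi$ directly by Moser iteration, after which $C^0$ for $\varphi$ is automatic. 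What your reduction buys is that openness, uniqueness, and the higher-order theory become entirely standard scalar PDE on $T^2$ (implicit function theorem modulo constants, maximum principle, Evans--Krylov, Schauder), whereas the paper invokes the four-manifold machinery of \cite{D,W,TWY} and a separate cohomological argument involving $H^-_J(M)$ for openness.
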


The uniqueness part of Theorem \ref{maintheorem} is due to Donaldson \cite{D} (see also \cite{W}).  Note that  $T^2$-invariance  is not required for the uniqueness statement.  On any $4$-manifold equipped with an almost complex structure $J$,  if $\tilde{\omega}_1$ and $\tilde{\omega}_2$ are  cohomologous symplectic forms compatible with $J$ and satisfying $\tilde{\omega}_1^2=\tilde{\omega}_2^2$ then $\tilde{\omega}_1=\tilde{\omega}_2$.

In addition, one can recast Theorem \ref{maintheorem} in terms of the Ricci form of the canonical connection. We explain this in Section \ref{Ricci} - see Theorem \ref{theoremricci} below.  This can be regarded as a kind of analogue of another formulation of Yau's theorem often referred to as the Calabi conjecture:  any representative of the first Chern class of a K\"ahler manifold can be written as the Ricci curvature of a K\"ahler metric in a given K\"ahler class.

The outline of the rest of the paper is as follows.  First, in Section \ref{sectionbound}, we prove the key \emph{a priori} estimate for the Calabi-Yau equation on the Kodaira-Thurston manifold.   Part of this argument involves a Moser iteration argument from \cite{TWY}.  We complete the proof of Thereom \ref{maintheorem} in Section \ref{proofmain}, using some estimates from \cite{W, TWY} and also \cite{D}.   In Section \ref{questions} we end with some further remarks and questions.

\setcounter{equation}{0}
\section{An \emph{a priori} estimate for the Calabi-Yau equation} \label{sectionbound}

In this section we derive a uniform \emph{a priori} bound for a $T^2$-invariant solution $\tilde{\omega}$ of the Calabi-Yau equation
\begin{equation} \label{CY2}
\tilde{\omega}^2 = \sigma,
\end{equation}
where $\sigma$ is a fixed volume form and $\tilde{\omega}$ is compatible with $J$.  We write $\sigma = e^F\Omega^2$ for a smooth $T^2$-invariant function $F$ so that the 
Calabi-Yau equation becomes
\begin{equation} \label{CY3}
\tilde{\omega}^2 = e^F \Omega^2.
\end{equation}
By the normalization of $\sigma$ we have 
\begin{equation} \label{sigma}
\int_M e^F \Omega^2 = \int_M \Omega^2.
\end{equation}

We now introduce some natural objects associated to the almost-K\"ahler manifold $(M, \Omega, J)$.  Write $g$ for the almost-K\"ahler metric, given by
$$g(X,Y) = \Omega(X, JY),$$
where we let $J$ act on vectors by duality with the following convention:
if $\tau$ is a $1$-form we let $\tau(JX)=-(J\tau)(X)$. We also let $J$ act on 
 $2$-forms $\eta$ by $(J\eta)(X,Y) = \eta(JX, JY)$.

 We assume that the solution $\tilde{\omega}$ of the Calabi-Yau equation
 $$\tilde{\omega}^2 = e^F \Omega^2$$ is cohomologous to $\Omega$ and so we can write
 \begin{equation} \label{tildeomega}
\tilde{\omega} =  \Omega  + da,
\end{equation}
for  $a$ a 1-form. Since $\ti{\omega}$ and $\Omega$ are $T^2$-invariant, after averaging $a$ by the $T^2$-action, we may assume that $a$ is also $T^2$-invariant.

The solution $\tilde{\omega}$ of (\ref{CY2}) is compatible with $J$ and we write 
$\tilde{g}$ for the associated almost-K\"ahler metric, given by
$$\tilde{g}(X,Y) = \tilde{\omega}(X,JY).$$
In this section we prove the following \emph{a priori} bound on the metric $\tilde{g}$.

\begin{theorem} \label{mainestimate}
There is a uniform constant $C$ depending only on $\inf_M \Delta F$  such that
\begin{equation}
\emph{\textrm{tr}}_g{\tilde{g}} \le C,
\end{equation}
where $\Delta$ is the Laplace operator associated to $g$.
\end{theorem}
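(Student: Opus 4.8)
The plan is to derive the estimate via a maximum principle argument applied to the quantity $\log \operatorname{tr}_g \tilde g$ (possibly with a lower-order correction term), following the strategy of Yau's second-order estimate for the complex Monge--Amp\`ere equation, as adapted to the almost-K\"ahler setting in \cite{W} and \cite{TWY}. First I would set up local computations using a unitary frame for $g$ which simultaneously diagonalizes $\tilde g$, and compute $\tilde\Delta \log \operatorname{tr}_g \tilde g$ where $\tilde\Delta = \operatorname{tr}_{\tilde g}(\sqrt{-1}\partial\dbar \,\cdot\,)$ is the canonical-connection Laplacian of $\tilde g$ (or the appropriate real Laplacian of $\tilde g$). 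In the K\"ahler case this yields, after using the equation $\tilde\omega^2 = e^F \Omega^2$, a differential inequality of the shape
\begin{equation}
\tilde\Delta \log \operatorname{tr}_g \tilde g \ \geq \ \frac{\Delta F - C_0}{\operatorname{tr}_g \tilde g},
\end{equation}
where $C_0$ bounds the relevant curvature of the fixed metric $g$; combining this with $\tilde\Delta \log \operatorname{tr}_g \tilde g \geq -C_1 \operatorname{tr}_{\tilde g} g$ and $\operatorname{tr}_g \tilde g \cdot \operatorname{tr}_{\tilde g} g \le \frac{1}{2}(\operatorname{tr}_g\tilde g)^2 e^{-F}$ (in dimension two, using $\tilde\omega^2 = e^F\Omega^2$) one runs the standard trick with $\log \operatorname{tr}_g \tilde g - A\phi$ or similar to get the bound at an interior maximum.

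The main obstacle, and the place where the argument genuinely departs from Yau's, is that $J$ is \emph{not} integrable, so the commutator of covariant derivatives picks up Nijenhuis-tensor terms, and $\tilde\omega = \Omega + da$ is not of the form $\Omega + \sqrt{-1}\partial\dbar\phi$. Concretely, when one differentiates the equation twice and commutes derivatives, extra terms involving the torsion of the canonical connection of $g$ and first derivatives of $\tilde g$ appear; these are not obviously absorbable. Here I would exploit \emph{two} special features of the present situation: (i) the metric $g$ on the Kodaira--Thurston manifold is \emph{explicit} and homogeneous, so its Nijenhuis tensor, curvature, and torsion are constant and can be computed once and for all; and (ii) the $T^2$-invariance reduces everything to functions of two variables, which kills many derivative terms outright. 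With these, the dangerous terms should either vanish by the symmetry or be controlled by a small multiple of $\operatorname{tr}_{\tilde g} g \cdot \operatorname{tr}_g \tilde g$ plus constants, allowing them to be absorbed.

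After establishing the differential inequality, I would combine it with a Moser iteration argument as in \cite{TWY} to convert an integral bound into the pointwise bound: the maximum principle alone may only give control in terms of $\int_M \operatorname{tr}_g \tilde g$ or a Sobolev-type quantity, so one estimates $\|\operatorname{tr}_g\tilde g\|_{L^p}$ for large $p$ using the inequality tested against powers of $\operatorname{tr}_g \tilde g$, integrating by parts and using \eqref{sigma} together with $\int_M \tilde\omega \wedge \Omega = \int_M \Omega^2$ (which bounds $\int_M \operatorname{tr}_g \tilde g$ a priori since $\tilde\omega$ is cohomologous to $\Omega$). Iterating $p \to \infty$ yields $\sup_M \operatorname{tr}_g \tilde g \le C$ with $C$ depending only on $\inf_M \Delta F$ and the fixed geometry of $(M,\Omega,J)$. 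I expect the bulk of the work to be bookkeeping in step one — verifying that the torsion/Nijenhuis contributions on this specific manifold have a favorable sign or size — while steps two and three are essentially routine given the inequality.
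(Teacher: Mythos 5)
Your endgame (the $L^1$ bound on $\textrm{tr}_g\tilde{g}$ coming from $[\tilde\omega]=[\Omega]$, followed by Moser iteration as in \cite{TWY}) coincides with the paper's, but the heart of the proof --- producing the differential inequality --- is left as a hope, and as stated it would fail. First, the Yau-style test function $\log \textrm{tr}_g\tilde{g} - A\phi$ is not available here: since $J$ is non-integrable there is no potential, only $\tilde\omega = \Omega + da$ with $a$ a $1$-form, so there is no term $-A\tilde\Delta\phi$ to absorb the bad contribution $-C_1\,\textrm{tr}_{\tilde g}g$, and without it the maximum principle gives nothing (at a maximum of $\log\textrm{tr}_g\tilde g$ one only learns $\Delta F \le C_0$ there). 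Second, and more seriously, your claim that the torsion/Nijenhuis terms arising from commuting derivatives ``should either vanish by the symmetry or be absorbed'' is precisely the non-routine point: the paper remarks (Section 5, Remark (1)) that the tensor $\mathcal{R}$ of \cite{TWY}, whose nonnegativity is exactly what would make the general frame computation deliver $\tilde\Delta u \ge -C$, has \emph{negative} components for this $(g,J)$. So the general second-order estimate machinery of \cite{W,TWY} does not directly apply to the Kodaira--Thurston manifold, and ``bookkeeping in a unitary frame'' plus $T^2$-invariance is not known to rescue it; you would need a genuinely new cancellation, which your sketch does not supply.

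What the paper actually does is different in kind: it uses the $T^2$-invariance at the level of the equation, not of the frame computation. Writing $a = f_1 dx + f_2 dt + f_3 dy + f_4(dz-xdy)$ with $f_i=f_i(x,y)$, the compatibility condition $J(da)=da$ reduces the Calabi--Yau equation to the two-dimensional real Monge--Amp\`ere-type equation $(1+f_{2,x})(1+f_{4,y}) - f_{4,x}^2 = e^F$, and $u = \tfrac12\textrm{tr}_g\tilde g = 2 + f_{2,x}+f_{4,y}$. A direct computation with the explicit two-variable Laplacians, together with the elementary inequality $(\textrm{tr}(PQ))^2 - 2\det(PQ)\ge 0$ for symmetric $2\times 2$ matrices with $P>0$, then gives the clean pointwise inequality $\tilde\Delta u \ge \Delta \log\nu = \Delta F \ge \inf_M \Delta F$, with no curvature, torsion, or $\textrm{tr}_{\tilde g}g$ terms to control at all. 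That inequality (a constant lower bound on $\tilde\Delta u$, not merely $\ge -C\,u$ modulo unabsorbed terms) is what makes your third step run; without an argument replacing it, your proposal has a genuine gap exactly where the paper's key Lemma sits.
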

\begin{proof}

A general $T^2$-invariant $1$-form $a$ can be written
$$a= f_1 dx + f_2 dt + f_3 dy + f_4 (dz - xdy),$$
where $f_i=f_i(x,y)$ (for $i=1, \ldots, 4$).  Taking the exterior derivative:
\begin{eqnarray*}
\lefteqn{da = f_{2,x} dx \wedge dt +   (f_{3,x} - f_{1,y} -f_4 )dx \wedge dy - f_{2,y} dt \wedge dy} \\
&& + f_{4,x} dx \wedge (dz-xdy) + f_{4,y} dy \wedge (dz-xdy),
 \end{eqnarray*}
where here and henceforth letter subscripts denote partial derivatives.  Compute
\begin{eqnarray} \nonumber
J(da) &   = &  f_{2,x} dx \wedge dt +   (f_{3,x} - f_{1,y} -f_4 )dt\wedge (dz-xdy)  \\
&& \mbox{} + f_{2,y} dx \wedge (dz-xdy)  - f_{4,x} dt \wedge dy + f_{4,y} dy \wedge (dz-xdy).
 \end{eqnarray}
The condition that $\ti{\omega}$ of the form (\ref{tildeomega}) is compatible with $J$ is equivalent to the equation
\begin{equation}
J(da)=da,
\end{equation}
and by the above this reduces to the following system of differential equations:
 \begin{eqnarray} \label{Ida3}
 f_{3,x} - f_{1,y} - f_4 & = & 0 \\ \label{Ida4}
 f_{2,y} - f_{4,x} & = & 0.
 \end{eqnarray}
Thus we can write
\begin{eqnarray} \nonumber
\ti{\omega} & = & (1+f_{2,x})dx\wedge dt+ (1+f_{4,y})dy\wedge (dz-xdy)
\\ && \mbox{} + f_{4,x} dx\wedge (dz-xdy)- f_{4,x}dt\wedge dy,\end{eqnarray}
and
 \begin{equation}
\ti{\omega}^2 = \left\{ (1+f_{2,x})(1+f_{4,y}) - f_{4,x}^2 \right\} \Omega^2, \label{square1}
 \end{equation}
and hence the Calabi-Yau equation (\ref{CY3}) becomes
\begin{equation}
(1 + f_{2,x})(1+f_{4,y}) -f_{4,x}^2 = e^F. \label{CY4}
\end{equation}

The basis of left-invariant vector fields dual to $\{dx, dt, dy, dz-xdy\}$ 
is $\{\de_x, \de_t, \de_y+x\de_z, \de_z\}$.
The matrix of $g$ with respect to this basis is the identity, while the matrix of $\ti{g}$ is
\begin{equation} \label{tildeg1}
\tilde{g} = \begin{pmatrix} 1+f_{2,x} & 0 & f_{4,x} & 0 \\ &&&\\ 0 & 1 +f_{2,x} & 0 & f_{4,x} \\ &&&\\ f_{4,x} & 0 & 1 +f_{4,y} & 0 \\&&&\\0 & f_{4,x} & 0 & 1 +f_{4,y} \end{pmatrix}.
\end{equation}

The following  lemma is the key ingredient of this paper and makes crucial use of the structure of the Kodaira-Thurston manifold.

\begin{lemma} \label{lemmalap}  Let $\tilde{\Delta}$ be the Laplace operator  associated to $\tilde{g}$ and define
\begin{equation}
u = \frac{ \emph{\textrm{tr}}_g{\tilde{g}}}{2} = 2 + f_{2,x} + f_{4,y}.
\end{equation}
Then
\begin{equation}
\tilde{\Delta}u \ge \inf_M \Delta F.
\end{equation}
\end{lemma}

\begin{proof}[Proof of Lemma \ref{lemmalap}]
A straightforward calculation shows that the Laplace operators of $g$ and $\tilde{g}$ respectively applied to  a general $T^2$-invariant function $\psi = \psi(x,y)$ are given by the formulae
\begin{eqnarray} \label{lap}
\Delta \psi & = & \frac{2\Omega \wedge d(Jd \psi)}{\Omega^2}  =  \psi_{xx} +\psi_{yy} 
\end{eqnarray}
and
\begin{eqnarray} \label{tildelap}
\tilde{\Delta} \psi & = & \frac{2 \tilde{\omega} \wedge d(Jd \psi)}{\tilde{\omega}^2} = \frac{1}{\nu} \left( (1+f_{2,x}) \psi_{yy} + (1 +f_{4,y}) \psi_{xx} - 2f_{4,x} \psi_{xy} \right), \quad 
\end{eqnarray}
where $\nu = (1+f_{2,x})(1+f_{4,y}) - f_{4,x}^2=e^F$.

Applying (\ref{lap}) and (\ref{tildelap}) to $\log \nu$ and  $u$ respectively, and making use of \eqref{Ida4} we find
\begin{equation} \label{tildedeltau}
\tilde{\Delta} u = \Delta \log \nu + \frac{1}{\nu} \left( \frac{\nu_x^2}{\nu} + \frac{\nu_y^2}{\nu} + 2\left(-f_{2,xx} f_{2,yy} + f^2_{2,yx} - f_{4,yy}f_{4,xx} +f^2_{4,yx} \right) \right).
\end{equation}
Now
\begin{equation}
\nu = AB-D^2 , \quad \nu_x = A f_{2,yy} + B f_{2,xx} - 2D f_{2,yx}, \quad \nu_y = A f_{4,yy} + B f_{4,xx} - 2 D f_{4,xy},
\end{equation}
where $A = 1 +f_{2,x}$, $B=1+ f_{4,y}$ and $D = f_{4,x}$.   Thus 
$$\nu_x = \textrm{tr} (\mathcal{L}), \quad \nu_y = \textrm{tr} (\mathcal{M}).$$
where
$$\mathcal{L} =  \begin{pmatrix} A & -D \\ -D & B \end{pmatrix} \begin{pmatrix} f_{2,yy} & f_{2,xy} \\ f_{2,xy} & f_{2,xx}  \end{pmatrix}, \quad \textrm{and} \quad \mathcal{M} = \begin{pmatrix} A & -D \\ -D & B \end{pmatrix} \begin{pmatrix} f_{4,yy} & f_{4,xy} \\ f_{4,xy} & f_{4,xx} \end{pmatrix}.$$
On the other hand,
$$\nu (f_{2,xx} f_{2,yy} - f^2_{2,yx}) = \det (\mathcal{L}), \quad \nu (f_{4,xx} f_{4,yy} - f^2_{4,yx}) = \det (\mathcal{M}),$$
and so
\begin{eqnarray} \nonumber
\lefteqn{\nu_x^2 + \nu_y^2 + 2\nu \left(-f_{2,xx} f_{2,yy} + f^2_{2,yx} - f_{4,yy}f_{4,xx} +f^2_{4,yx} \right)} \\ \label{nonneg}
& = & (\tr{} (\mathcal{L}))^2 - 2\det (\mathcal{L}) +(\tr{} (\mathcal{M}))^2 - 2\det (\mathcal{M})\ge 0.
\end{eqnarray}
For the inequality of (\ref{nonneg}), we are using the following elementary fact from linear algebra.  If $P$ and $Q$ are $2 \times 2$ symmetric matrices with $P$ positive definite, then
\begin{equation} \label{la}
(\tr{} (PQ))^2 - 2\det (PQ) \ge 0.
\end{equation}
Indeed, a direct computation gives (\ref{la})  in the case when $P$ is the identity matrix.  For general $P$, write $P=SS^T$.  Then
\begin{equation} \label{la2}
(\tr{} (PQ))^2 - 2\det (PQ) = (\tr{} (S^TQS))^2 - 2\det (S^TQS)   \ge 0,
\end{equation}
since $S^TQS$ is symmetric, thus establishing (\ref{la}).

Combining (\ref{tildedeltau}) and (\ref{nonneg}) completes the proof of Lemma \ref{lemmalap}.
 Q.E.D. \end{proof}

We can now finish the proof of Theorem \ref{mainestimate}.
Observe that 
$$u=\frac{2\Omega\wedge\ti{\omega}}{\Omega^2},$$
and thus
$$\int_M u \Omega^2=2\int_M \Omega \wedge (\Omega +da)  =2 \int_M\Omega^2.$$
From this $L^1$ bound of $u$ together with Lemma  \ref{lemmalap} we can apply 
the Moser iteration argument of \cite[Theorem 1.4]{TWY} to obtain $\| u\|_{C^0} \leq C$, where the constant $C$ depends only on $F$.  Although this argument is contained in \cite{TWY}, we include a brief sketch here for the reader's convenience.  For $p>0$, compute using the Calabi-Yau equation:
\begin{eqnarray}  \nonumber
\int_M | \nabla_g u^{p/2}|^2 \Omega^2 & \le & C' \int_M u d(u^{p/2}) \wedge J d(u^{p/2}) \wedge \tilde{\omega} \\ \nonumber
& = &  -  \frac{C' p}{8} \int_M u^p (\tilde{\Delta} u) \tilde{\omega}^2 \\ \label{nablau} 
& \le & C'' p \int_M u^p \Omega^2,
\end{eqnarray}
where we have used an integration by parts to go from the first to the second line, and Lemma \ref{lemmalap} for the third line.  Combining (\ref{nablau}) with the Sobolev inequality applied to $u^{p/2}$ we obtain for any $p>0$,
$$ \| u \|_{L^{2p}} \le C^{1/p} p^{1/p} \| u \|_{L^p},$$
and a straightforward iteration argument now gives
$$\| u \|_{C^0} \le C \| u \|_{L^1} \le C,$$
as required.
 Q.E.D.
\end{proof}

\setcounter{equation}{0}
\section{Proof of the Main Theorem} \label{proofmain}

We now complete the proof of the main theorem.  Following the method proposed in  \cite{D, W} we solve the Calabi-Yau equation (\ref{CY2}) using a continuity method. 

For $t \in [0,1]$, consider the family of equations
\begin{equation} \label{cty}
\tilde{\omega}_t^2 = e^{tF+c_t} \Omega^2, \quad \textrm{with } \, [\tilde{\omega}_t] = [\Omega],
\end{equation}
where the symplectic form $\ti{\omega}_t$ is compatible with $J$ and $c_t$ is the constant given by
\begin{equation}
\int_M e^{tF+c_t} \Omega^2 = \int_M \Omega^2.
\end{equation}
We wish to show that (\ref{cty}) has a $T^2$ invariant solution for $t=1$.  
Consider the set
$$\mathcal{T} = \{ t \in [0,1] \ | \ \textrm{there exists a smooth solution of (\ref{cty}) for } t' \in [0,t] \}.$$
Since $\tilde{\omega}_0 = \Omega$ solves (\ref{cty}) for $t=0$ we see that $0 \in \mathcal{T}$.  To prove the main theorem it suffices to show that $\mathcal{T}$ is both open and closed in $[0,1]$. 

For the openness part, we first need a brief discussion on the cohomology of $M$.  First observe that 
the Kodaira-Thurston manifold has $b^+(M)=2$ (see, for example, equation (3.1) of \cite{L}).  A basis 
for the space of $g$-harmonic self-dual $2$-forms is given by $\Omega$ together with the symplectic form $\Omega_1$ given by
\begin{equation} \label{Omega1}
\Omega_1 = dx \wedge (dz-xdy) + dt \wedge dy.
\end{equation}
Notice that $\Omega_1$ is $T^2$-invariant, closed, of type $(2,0)+(0,2)$ and self-dual.
In particular $J(\Omega_1)=-\Omega_1$.

In \cite{LZ} a cohomology group $H^-_J(M)$ was introduced as the space of all cohomology classes in $H^2(M;\mathbb{R})$ that can be represented by closed forms of type $(2,0)+(0,2)$.  This was further studied in \cite{DLZ}.  In our case $H^-_J(M)$ is $1$-dimensional, generated by $[\Omega_1]$.

We also have that
$$\Omega_1\wedge\Omega=0, \quad \Omega_1^2=\Omega^2,$$
and so $\Omega$, $\Omega_1$ span a maximal subspace $H^+\subset H^2(M;\mathbb{R})$ on which the intersection form is positive definite.
Proposition 1 of \cite{D} (cf. \cite{W}) shows that if there exists a solution of (\ref{cty}) for $t_0 \in [0,1]$ then one can find a solution $\tilde{\omega}_t$ of the equation
\begin{equation} \label{cty2}
\tilde{\omega}_t^2 = e^{tF+c_t} \Omega^2, 
\end{equation}
 for $t$ sufficiently close to $t_0$, with $\tilde{\omega}_t$ lying in the subspace $H^+$.   We claim that $\tilde{\omega}_t$ lies in $[\Omega]$.  Indeed, writing $\tilde{\omega}_t = \alpha_t \Omega + \beta_t \Omega_1+da$ we see that
\begin{equation} \label{deqn1}
\int_M \Omega^2 = \int_M \tilde{\omega}_t^2 =\alpha_t^2 \int_M \Omega^2 + \beta_t^2 \int_M \Omega_1^2,
\end{equation}
and
\begin{equation}  \label{deqn2}
 0 = \int_M \tilde{\omega}_t \wedge \Omega_1 = \beta_t \int_M \Omega_1^2,
\end{equation}
giving $\alpha_t=1$ and $\beta_t=0$.  For (\ref{deqn2}) we have used the fact that $\tilde{\omega}_t$ is of type (1,1) and $\Omega_1$ is of type $(2,0)+(0,2)$. Hence $\tilde{\omega}_t$ lies in $[\Omega]$, showing that the set
  $\mathcal{T}$ is open.  More generally the same argument applies to all $4$-manifolds satisfying $\dim H^-_J(M)=b^+(M)-1$ (see \cite{LZ, DLZ, li}). Moreover, since $\Omega$ and $F$ have $T^2$ symmetry, the implicit function theorem argument of \cite{D} shows that the solution $\tilde{\omega}_t$ for $t \in \mathcal{T}$ must have $T^2$ symmetry.  To show that $\mathcal{T}$ is closed it remains to prove that a solution $\tilde{\omega}_t$ of (\ref{cty}) is uniformly bounded in $C^{\infty}$, independent of $t$.  

For convenience, we write $F$ for $tF+c_t$ and $\tilde{\omega}$ for $\tilde{\omega}_t$.  The symplectic form $\tilde{\omega}$ is of the form (\ref{tildeomega}).  
 Then the result of Theorem \ref{mainestimate} shows that $\tr{g}{\tilde{g}}$ is bounded by a constant depending only on the $C^2(g)$ bound of $F$.  We can now directly apply the argument of \cite{W} or 
 \cite{TWY} to obtain a uniform H\"older bound on the solution $\tilde{\omega}$.  The higher order estimates then follow from the argument given in \cite{D} or \cite{W} (see also \cite{TWY}).  This completes the proof of the main theorem.

\setcounter{equation}{0}
\section{The Ricci form of the canonical connection} \label{Ricci}

We now show that the main theorem can be recast in terms of the Ricci form of a certain connection on $M$.
 In general,  given any symplectic form $\omega$ compatible with $J$ there is an associated \emph{canonical connection} $\nabla$ on $M$.  This connection is uniquely determined by the properties that if $g$ is the associated almost-K\"ahler metric then $\nabla g= 0 =\nabla J$ and the $(1,1)$-part of the torsion of $\nabla$ vanishes identically (see, for example, \cite{TWY} and the references therein). The curvature form of this connection expressed with respect to a local unitary frame is a skew-Hermitian matrix of $2$-forms $\{\Psi^i_j\}$, ($i,j =1,2$). The $2$-form
$$\mathrm{Ric}(\omega, J)=\frac{\sqrt{-1}}{2\pi}\sum_{i=1}^2\Psi^i_i,$$
is then closed and cohomologous to the first Chern class $c_1(M,J)$.  We call this $2$-form the Ricci form of the canonical connection.

Then the main theorem can be restated as follows\footnote{The idea of reformulating the Calabi-Yau equation in terms of the Ricci form of the canonical connection was known to V. Apostolov and T. Dr\u{a}ghici shortly after the paper \cite{TWY} appeared (see the discussion in \cite{li}).}  (cf. \cite[Conjecture 2.4]{TW} or Question 6.8 of \cite{li}).

\begin{theorem} \label{theoremricci}
Let $F$ be a smooth $T^2$-invariant function on $M$. Then there exists a $T^2$-invariant symplectic form $\tilde{\omega}$ on $M$, compatible with the almost complex structure $J$, satisfying
\begin{equation} \label{CY5}
\mathrm{Ric}(\ti{\omega},J) = -\frac{1}{2}d(JdF).
\end{equation}
\end{theorem}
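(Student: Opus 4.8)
The plan is to deduce Theorem \ref{theoremricci} from Theorem \ref{maintheorem} by computing the Ricci form of the canonical connection explicitly and relating it to the Calabi-Yau equation. First I would recall the standard fact (used implicitly in \cite{TWY}) that for an almost-K\"ahler $4$-manifold the Ricci form of the canonical connection of a compatible symplectic form $\tilde{\omega}$ is given locally by $\mathrm{Ric}(\tilde{\omega},J) = -\frac{\sqrt{-1}}{2} \partial\bar\partial \log \det(\tilde{g})$ in a suitable $J$-holomorphic-type coordinate system, or more invariantly that $\mathrm{Ric}(\tilde{\omega},J) - \mathrm{Ric}(\Omega,J)$ is an exact form which, for $T^2$-invariant data, can be written as $-\frac{1}{2}d(Jd\log(\tilde{\omega}^2/\Omega^2))$. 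The key point is that on the Kodaira-Thurston manifold, with the left-invariant frame and $T^2$-invariant tensors, all the torsion/Nijenhuis correction terms that would normally complicate this formula can be computed directly from the explicit structure equations and shown to produce exactly the naive K\"ahler-type expression.

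Concretely, the steps I would carry out are: (i) fix a $T^2$-invariant compatible symplectic form $\tilde{\omega}$ cohomologous to $\Omega$, so $\tilde{g}$ has the matrix form \eqref{tildeg1}; (ii) compute the Ricci form of its canonical connection in terms of the density $\nu = \tilde{\omega}^2/\Omega^2$, showing $\mathrm{Ric}(\tilde{\omega},J) = -\frac{1}{2}d(Jd\log\nu)$ (using formula \eqref{lap}, $\Delta\psi = \psi_{xx}+\psi_{yy}$, and $\frac{2\Omega\wedge d(Jd\psi)}{\Omega^2}$, one sees $-\frac{1}{2}d(Jd\psi)$ is the natural $T^2$-invariant $(1,1)$-form with $\tr_g$ equal to $-\Delta\psi$); (iii) observe that $\mathrm{Ric}(\Omega,J) = 0$ for the flat background, so the formula in (ii) is consistent with cohomology; (iv) given $F$, apply Theorem \ref{maintheorem} with $\sigma = e^F\Omega^2$ — after first normalizing by replacing $F$ with $F + c$ where $e^c \int_M \Omega^2 = \int_M e^F\Omega^2$, which is harmless since adding a constant to $F$ does not change $d(JdF)$ — to obtain a $T^2$-invariant $\tilde{\omega}$ solving $\tilde{\omega}^2 = e^{F+c}\Omega^2$, i.e. $\log\nu = F+c$; (v) conclude $\mathrm{Ric}(\tilde{\omega},J) = -\frac{1}{2}d(Jd(F+c)) = -\frac{1}{2}d(JdF)$, which is \eqref{CY5}.

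The main obstacle is step (ii): verifying that the Ricci form of the \emph{canonical} connection — which is not the Levi-Civita connection and whose curvature involves the Nijenhuis tensor of $J$ — really reduces, for $T^2$-invariant data on this particular manifold, to the clean expression $-\frac{1}{2}d(Jd\log\nu)$ with no extra torsion terms. I would handle this by working in the explicit left-invariant coframe $\{dx, dt, dy, dz-xdy\}$, writing down the structure equations ($d(dz-xdy) = -dx\wedge dy$ is the only nontrivial one), choosing the unitary coframe adapted to $\tilde{g}$, and computing the connection $1$-forms and curvature directly; the $T^2$-invariance collapses everything to functions of $(x,y)$ and the computation becomes a finite check. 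An alternative, perhaps cleaner route is to invoke the known formula from \cite{TWY} relating $\mathrm{Ric}(\tilde{\omega},J)$ and $\mathrm{Ric}(\Omega,J)$ directly: their difference equals $-\frac{1}{2}d(Jd\log\nu)$ plus a term involving the Nijenhuis tensor, and then to check that the Nijenhuis contribution vanishes in the $T^2$-invariant setting on $M$ — this is plausible because the Nijenhuis tensor of $J$ here is itself built from the constant structure coefficients and pairs trivially against the $(x,y)$-dependent quantities. Either way, once step (ii) is established the theorem follows immediately from Theorem \ref{maintheorem}, and I would note that the argument in fact shows the two theorems are equivalent.
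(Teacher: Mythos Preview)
Your overall strategy --- deduce the result from Theorem~\ref{maintheorem} via the relation between the Ricci form of the canonical connection and the volume ratio --- is exactly what the paper does. The paper's execution is cleaner on the point you flag as the ``main obstacle'': the identity
\[
\mathrm{Ric}(\tilde\omega,J)=\mathrm{Ric}(\Omega,J)-\tfrac{1}{2}\,d\bigl(Jd\log(\tilde\omega^2/\Omega^2)\bigr)
\]
holds on any almost-K\"ahler $4$-manifold with \emph{no} Nijenhuis correction term; the paper simply cites this as \cite[(3.16)]{TWY} (see also \cite{GH}). So your step~(ii) requires no separate computation for general $T^2$-invariant $\tilde\omega$, and your ``alternative route'' is the actual route, minus the worry. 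What remains is to show $\mathrm{Ric}(\Omega,J)=0$ for the fixed reference form. The paper does this by writing down the global unitary coframe $\theta^1=(dx+\sqrt{-1}\,dt)/\sqrt{2}$, $\theta^2=(dy+\sqrt{-1}(dz-xdy))/\sqrt{2}$, computing the connection $1$-forms and curvature $\Psi^i_j$ of the canonical connection of $g$ explicitly, and checking $\Psi^1_1+\Psi^2_2=0$. One caution: the background is \emph{not} flat --- the individual curvature components $\Psi^i_j$ are nonzero --- so your ``observe $\mathrm{Ric}(\Omega,J)=0$ for the flat background'' in (iii) is not an observation but a genuine computation, and is in fact the entire content of the paper's proof. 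Your handling of the normalization in (iv) (replacing $F$ by $F+c$) is correct and arguably more careful than the paper, which applies Theorem~\ref{maintheorem} without comment on this point.
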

\begin{proof}
We choose $\ti{\omega}$ to be the solution of the Calabi-Yau equation \eqref{CY3} given by Thereom \ref{maintheorem}.
Differentiating twice the logarithm of \eqref{CY3} gives
$$\mathrm{Ric}(\ti{\omega},J)=-\frac{1}{2}d(JdF)+\mathrm{Ric}(\Omega,J),$$
as in \cite[(3.16)]{TWY} (and see also \cite[pag.72]{GH}, \cite[Proposition 4.5]{li}). It remains to show that $\mathrm{Ric}(\Omega,J)=0$.  We choose the global left-invariant unitary coframe 
$$\theta^1=\frac{dx+\mn dt}{\sqrt{2}},\quad \theta^2=\frac{dy+\mn(dz-xdy)}{\sqrt{2}}.$$
We first claim that the connection $1$-forms $\{\theta^i_j\}$ ($i,j =1,2$) of the canonical connection of $g$ are given by
$$\theta^1_1=0,\quad  \theta^1_2=-\frac{\mn}{2\sqrt{2}}\theta^2, \quad \theta^2_1=-\frac{\mn}{2\sqrt{2}}\ov{\theta^2}, \quad \theta^2_2=\frac{\mn}{2\sqrt{2}}(\theta^1+\ov{\theta^1}).$$
Indeed, the matrix $\{\theta^i_j\}$ is skew-Hermitian, and so defines a connection $\nabla$ with $\nabla g=0=\nabla J$. The torsion $2$-forms $\{\Theta^i\}$ ($i=1,2$) of $\nabla$ are defined by the first structure equation
$$d\theta^i=-\theta^i_j\wedge\theta^j+\Theta^i.$$
Since we have
$$d\theta^1=0,\quad d\theta^2=-\frac{\mn}{\sqrt{2}}dx\wedge dy=
-\frac{\mn}{2\sqrt{2}}(\theta^1\wedge\ov{\theta^2}-\theta^2\wedge\ov{\theta^1}
+\theta^1\wedge\theta^2+\ov{\theta^1}\wedge\ov{\theta^2}),$$
one readily sees that
$$\Theta^1=0,$$
$$\Theta^2=-\frac{\mn}{2\sqrt{2}}\ov{\theta^1}\wedge\ov{\theta^2},$$
which have no $(1,1)$-part, proving the claim. We note here that since $\Omega$ is closed, the torsion $\{\Theta^i\}$ is equal to the Nijenhuis tensor of $J$ \cite{TWY}.

The curvature $\{\Psi^i_j\}$ of $\nabla$ is given by the second structure equation
$$d\theta^i_j=-\theta^i_k\wedge\theta^k_j+\Psi^i_j,$$
which gives
\begin{equation*}
\begin{split}
\Psi^1_1 & = -\frac{1}{8} \theta^2 \wedge \ov{\theta^2}, \\
\Psi^1_2 & = \frac{1}{8} \left( - \theta^1 \wedge \ov{\theta^2} + 2 \theta^2 \wedge \ov{\theta^1} - 2 \theta^1 \wedge \theta^2 - \ov{\theta^1} \wedge \ov{\theta^2} \right), \\
\Psi^2_1 & = \frac{1}{8} \left(  - \theta^2 \wedge \ov{\theta^1} + 2 \theta^1 \wedge \ov{\theta^2} +2\ov{\theta^1} \wedge \ov{\theta^2} + \theta^1 \wedge \theta^2 \right), \\
\Psi^2_2 & =  \frac{1}{8} \theta^2 \wedge \ov{\theta^2}.
\end{split}
\end{equation*}
Since $\Psi^1_1+\Psi^2_2=0$, it follows that $\mathrm{Ric}(\Omega,J)=0$. Q.E.D.
\end{proof}

We note here that the standard Ricci curvature of the Levi-Civita connection of $g$ cannot be identically zero: if it were, the metric $g$ would be almost-K\"ahler and Einstein with vanishing scalar curvature, and a result of Sekigawa \cite{S} would imply that $J$ is integrable.

\section{Further remarks and questions} \label{questions}

\noindent
(1) \ 
In \cite{TWY} it was shown that the inequality $\tilde{\Delta} u \ge - C$ holds assuming the nonnegativity of a certain tensor $\mathcal{R}$, which can be expressed in terms of the 
curvature of the canonical connection of the reference almost-Hermitian metric and the Nijenhuis tensor.  However, we cannot directly apply this result to the case of the Kodaira-Thurston manifold since the  tensor $\mathcal{R}$ associated to $(g, J)$ has negative components, as can be confirmed by a direct (and lengthy) computation.

\bigskip
\noindent
(2) \ It would be interesting to know whether Theorem \ref{maintheorem} holds for $T^2$-invariant almost complex structures on the Kodaira-Thurston manifold other than $J$.  Pushing this further, one could also investigate estimates for the Calabi-Yau equation in terms of a taming but non-compatible symplectic form.  This could be used to address the conjecture of Donaldson that the existence of a taming symplectic form implies the existence of a compatible symplectic form (see \cite{D} and also \cite{TWY}, \cite{LZ}, \cite{DLZ}).  We note  that it is of course not sensible to ask this question for our given almost complex structure $J$, since $\Omega$ is already a compatible symplectic form.

\bigskip
\noindent
(3) \ It would be desirable to remove the assumption of $T^2$ invariance in the statement of Theorem \ref{maintheorem}.  However, the inequality of Lemma \ref{lemmalap} does not seem to hold by a similar argument in this more general case and so other techniques may be needed.

\bigskip
\bigskip
\noindent
{\bf Acknowledgements}  The authors would like to express their gratitude to:  S.K. Donaldson for a number of enlightening discussions; D.H. Phong for his support and advice; 
 S.-T. Yau for 
 his encouragement and for many helpful conversations.
 In addition, the authors thank V. Apostolov and C. Taubes for some useful conversations.  Finally, the authors are especially grateful to  T. Dr\u{a}ghici for a number of very helpful suggestions, including the observation of equation (\ref{deqn2}) which simplified and improved a previous draft of this paper.

\bigskip
\bigskip

\bigskip
\noindent
Mathematics Department, Harvard University, 1 Oxford Street, Cambridge MA 02138

\bigskip
\noindent
Mathematics Department, University of California, San Diego, 9500 Gilman Drive \#0112, La Jolla CA 92093

\end{document}